
\documentclass[letterpaper, 10 pt, conference]{ieeeconf}  

\IEEEoverridecommandlockouts      

\overrideIEEEmargins

\usepackage[bmargin=25.4mm, rmargin=19.1mm, lmargin=19.1mm, tmargin=19.1mm]{geometry}
\usepackage{graphicx}
\usepackage{xcolor}
\usepackage{dsfont}
\usepackage{subcaption}
\usepackage{mwe}

\overrideIEEEmargins                                      


\usepackage{amsmath} 
\usepackage{amssymb}  
\usepackage{tikz}
\usetikzlibrary{calc,trees,positioning,arrows,chains,shapes.geometric,%
    decorations.pathreplacing,decorations.pathmorphing,shapes,%
    matrix,shapes.symbols}
\usetikzlibrary{quotes,angles}    
\tikzset{
>=stealth',
  punktchain/.style={
    rectangle, 
    rounded corners, 
    draw=black, very thick,
    text width=10em, 
    minimum height=3em, 
    text centered, 
    on chain},
  line/.style={draw, thick, <-},
  element/.style={
    tape,
    top color=white,
    bottom color=blue!50!black!60!,
    minimum width=8em,
    draw=blue!40!black!90, very thick,
    text width=10em, 
    minimum height=3.5em, 
    text centered, 
    on chain},
  every join/.style={->, thick,shorten >=1pt},
  decoration={brace},
  tuborg/.style={decorate},
  tubnode/.style={midway, right=2pt},
}

\usepackage{amsthm}
\newtheorem{theorem}{Theorem}
\newtheorem{definition}{Definition}
\newtheorem{remark}{Remark}
\renewcommand{\qedsymbol}{$\blacksquare$}

\addtolength{\headheight}{14pt}






\newcommand{\bal}[1] {\ensuremath{\left(\begin{array}{#1}}}
\newcommand{\ear} {\ensuremath{\end{array}\right)}}

\newcommand{\bals}[1] {\ensuremath{\left[\begin{array}{#1}}} 
\newcommand{\ears} {\ensuremath{\end{array} \right] }} 





\let\leq\leqslant
\let\geq\geqslant

\newcommand{\calE}{\ensuremath{\mathcal{E}}}

\newcommand{\calI}{\ensuremath{\mathcal{I}}}

\newcommand{\calL}{\ensuremath{\mathcal{L}}}


\newcommand{\bmat}{\begin{matrix}}
\newcommand{\emat}{\end{matrix}}
\newcommand{\bbm}{\begin{bmatrix}}
\newcommand{\ebm}{\end{bmatrix}}
\newcommand{\bpm}{\begin{pmatrix}}
\newcommand{\epm}{\end{pmatrix}}
\newcommand{\bse}{\begin{subequations}}
\newcommand{\ese}{\end{subequations}}
\newcommand{\beq}{\begin{equation}}
\newcommand{\eeq}{\end{equation}}
\newcommand{\ben}{\begin{enumerate}}
\newcommand{\een}{\end{enumerate}}
\newcommand{\beni}{\renewcommand{\labelenumi}{\roman{enumi}.}
\renewcommand{\theenumi}{\roman{enumi}}\begin{enumerate}}
\newcommand{\eeni}{\end{enumerate}\renewcommand{\labelenumi}{\arabic{enumi}.}
\renewcommand{\theenumi}{\arabic{enumi}}}
\newcommand{\bena}{\renewcommand{\labelenumi}{\alpha{enumi}.}
\renewcommand{\theenumi}{\alpha{enumi}}\begin{enumerate}}
\newcommand{\eena}{\end{enumerate}\renewcommand{\labelenumi}{\arabic{enumi}.}
\renewcommand{\theenumi}{\arabic{enumi}}}
\newcommand{\bit}{\begin{itemize}}
\newcommand{\eit}{\end{itemize}}


\newcommand{\R}{\ensuremath{\mathbb R}}

%
%


%

\title{\LARGE \bf
Multi-agent formation control for circumnavigation of dynamic shapes}

\author{Joana Fonseca, Jieqiang Wei$^*$, Karl H. Johansson and Tor Arne Johansen
\thanks{*: The corresponding author}
\thanks{
This work is supported by Knut and Alice Wallenberg Foundation, Swedish Research Council, Swedish Foundation for Strategic Research, and Research Council of Norway, CoE AMOS grant number 223254.}
\thanks{Joana Fonseca, Jieqiang Wei, Karl H. Johansson are with the ACCESS Linnaeus Centre, School of Electrical Engineering and Computer Science. 
 KTH Royal Institute of Technology,
 SE-100 44 Stockholm, Sweden.
 {\tt\small \{jfgf, jieqiang, kallej\}@kth.se}.
}
\thanks{Tor Arne Johansen is with Department of Engineering Cybernetics,
Centre for Autonomous Marine Operations
and Systems (NTNU AMOS),
Norwegian University of Science
and Technology,
Trondheim N-7491, Norway.
{\tt\small tor.arne.johansen@ntnu.no}.
}
}

\begin{document}

\maketitle
\thispagestyle{empty}
\pagestyle{empty}

\begin{abstract}

The problem of multi-agent formation control for target tracking is considered in this paper. The target is an irregular dynamic shape approximated by a circle with moving centre and varying radius.
It is assumed that there are \textit{n} agents and one of them is capable of measuring both the distance to the boundary of the target and to its centre. All the agents must circumnavigate the boundary of the target while forming a regular polygon. We also consider a satellite capable of providing an initial noisy estimate of the target.
A control protocol is designed for all agents and the convergence to the desired state is proved up to a limit bound.
A simulated example is provided to verify the performance of the control protocol designed in this paper.

\end{abstract}

\section{INTRODUCTION}

The use of unmanned vehicles has allowed higher levels of precision and cost efficiency in many research expeditions \cite{Sivertsen}. It is particularly relevant in challenging or hazardous environments, and if real-time data exchange is required \cite{Lucieer}. 

In this paper we consider a circumnavigation problem of a moving shape on a planar surface. This problem has many applications, especially in ocean science, for instance the tracking of oil spills, algal blooms, plumes, frontal zones as well as toxic clouds. These shapes are detected via satellite and may require persistent tracking for real-time data collection. Using circumnavigation we can obtain data from different areas of the target at the same time allowing to explore the developing of different fronts.

Target tracking and multi-agent formation has a long history \cite{egerstedt2001formation,dimarogonas2008stability,Sun2018,cao2007controlling,oh2015survey,lin2005necessary}. A closely related work \cite{Shames2012} proposed one adaptive protocol to circumnavigate around a moving point, e.g., the fish tracking using AUVs. 

For applications like the one presented in this paper, the target to track is not defined as a moving point but as a time varying irregular shape. We assume this shape may be approximated by a moving circle with varying radius. The available literature does not present a solution to this problem.

There are several methods to solve the monitoring of potential irregular dynamic shapes. One of them would be via satellite. The advantage of this method would be providing an image of the target location, but the disadvantage is the low frequency of measurements, namely satellites that are not geosynchronous can only measure a specific area of the earth periodically with revisit times in the range of hours to days. Also the image resolution may be poor and there could be low visibility due to atmosphere phenomena. Our approach to the problem is relying on satellite images together with autonomous sensing vehicles.

Multi-agent formation includes several agents with similar or differing sensing equipment such as imaging sensors. The agents may be, for instance, drones with different altitudes and fields of view or surface agents. This formation may include a satellite for a wider view to collect different type of data. Having so, we may find a symbiotic relationship between a team of surface agents or drones and a satellite.

The remaining sections of this paper are organised as follows. In Section \ref{s: preli}, some necessary preliminaries are recalled. In Section \ref{s: problem}, the main problem of interest is formulated. 
The main results are presented in Section \ref{s: main}, where the protocol is designed and its proofs of convergence presented.
Some simulations presenting the performance of the proposed algorithm are given in Section \ref{s:simulation}. 
Concluding remarks and future directions come in Section \ref{s: conclusion}.

\textbf{Notations.} The notations used in this paper are fairly standard. $I_n$ is the $n$ dimensional identity matrix. $\|\cdot\|_p$ denotes the $\ell_p$-norm and  the $\ell_2$-norm is denoted simply as $\|\cdot\|$ without a subscript.
We define a rotation matrix $E$ as  
\begin{equation}
E =
\begin{bmatrix}
0 & 1 \\
-1 & 0
\end{bmatrix} .
\end{equation}

\section{PRELIMINARIES}\label{s: preli}

In this section, we briefly review some results from graph theory and adaptive control that will be used in this paper.

Here we recall some terminologies from graph theory \cite{Bollobas98}. Let $\mathcal{G}=(\mathcal{V},\mathcal{E})$ be a directed graph with 
node set $\mathcal{V}=\{v_1,\ldots,v_n\}$,  edge set $\mathcal{E}\subseteq\mathcal{V}\times\mathcal{V}$,
An edge of $\mathcal{G}$ is denoted by $e_{ij} := (v_i,v_j)$ and we write $\calI=\{1,2,\ldots,n\}$. 
The set of neighbors of node $v_i$ is 
denoted by $N_i := \{v_j\in\mathcal{V}:e_{ji}\in\mathcal{E}\}$.
A directed path from node $v_i$ to node $v_j$ is a chain of edges from $\calE$ 
such that the first edge starts from $v_i$, the last edge ends at  $v_j$ and 
every edge in between starts where the previous edge ends. A directed path is called a directed ring if the initial node and ending node are coincident.

The incidence matrix of a digraph is denoted as $B\in\R^{n\times m}$, with
$B_{ij}=-1$ if the $j^{\text{th}}$ edge is towards vertex
$i$, and equal to $1$ if the $j^{\text{th}}$ edge is originating from
vertex $i$, and $0$ otherwise.

Persistent excitation plays a key role in establishing parameter convergence in adaptive identification \cite{Anderson77,SHIMKIN1987}. 
\begin{definition}\cite{SHIMKIN1987}
The function $f\in\calL_e^2(\R^n)$ is said to be \emph{persistently exciting (p.e.)} if there exist positive constants $\varepsilon_1, T$ such that for all $\tau > 0$, 
\begin{align*}
    \int_{\tau}^{T+\tau} f(t)f(t)^\top dt > \varepsilon_1 I_n.
\end{align*}
$T$ will be termed an excitation period of $f$.
\end{definition}

\section{PROBLEM STATEMENT}\label{s: problem}

In this paper, we consider $n$ agents one of which is a sensing agent. We also consider a satellite that provides a noisy starting estimate of the target. All of the $n$ agents are initialized at position $p_i(0), i\in\calI$, which are outside of the shape and form a counterclockwise directed ring on the surface. It is desired that these agents are capable of circumnavigating an irregular shape, i.e., moving along the boundaries, that varies with time. We assume that this shape is approximated by a moving circle, which is specified by
\begin{equation}\label{e:target}
\begin{aligned} 
(\mathbf{c}(t),r(t)) \in \mathbb{R}^3, \\
\end{aligned}
\end{equation}
where $\mathbf{c}(t)= (x_t(t), y_t(t))$ and $r(t)$ are the centre and the radius of the circle, respectively. Then, the satellite would provide an initial estimate $\mathbf{\hat{c}}(0)= (\hat{x}_t(0), \hat{y}_t(0))$ and $\hat{r}(0)$.
Suppose the estimation of the target, denoted as $\hat{c}$, is given, then the counterclockwise angles between the vector $p_{i}-\hat{c}$ and $p_{i+1}-\hat{c}$ is denoted as $\beta_i$ for $i=1,\dots,n-1$, and the angle between $p_{n}-\hat{c}$ and $p_{1}-\hat{c}$ is denoted as $\beta_n$,
\medskip
 \begin{equation}\label{beta}
 \begin{aligned} 
  \beta_{i} = & \angle(p_{i+1}-\hat{c},p_{i}-\hat{c}), \qquad i=1,\dots,n-1 \\
  \beta_n = & \angle(p_{1}-\hat{c},p_{n}-\hat{c}).
 \end{aligned}
 \end{equation}
 
\medskip

Notice that in this case, 
\begin{align}
    \beta_i(0)\geq 0, \quad \textnormal{and} \quad \sum_{i=1}^{n} \beta_i(0) = 2\pi.
 \end{align} 
 
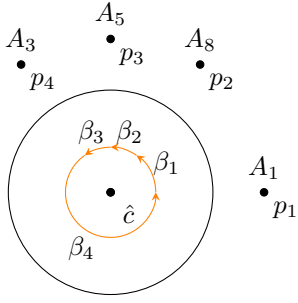
\begin{figure}
\centering
\begin{tikzpicture}
  [
    scale=1.7,
    >=stealth,
    point/.style = {draw, circle,  fill = black, inner sep = 1pt},
    dot/.style   = {draw, circle,  fill = black, inner sep = .2pt},
  ]
  
  \def\rad{0.8}
  \node (est) at (0,0) [point, label = {below right:$\hat{c}$}]{};
  \draw (est) circle (\rad);

  \node (c) at (0,0) [point, label] {};
  \node (n1) at (1.2,0) [point, label= $A_1$] {};
  \node (n2) at (0.7,1) [point, label= $A_8$] {};
  \node (n3) at (0,1.2) [point, label= $A_5$] {}; 
  \node (n4) at (-0.7,1) [point, label= $A_3$] {};

    \draw[dotted]
    -- (0,0) coordinate (centre) node[left] {}
    -- (1.2,0) coordinate (p1) node[below right] {$p_1$};
    \draw[dotted]
    -- (0,0) coordinate (centre) node[left] {}
    -- (0.7,1) coordinate (p2) node[below right] {$p_2$};
    \draw[dotted]
    -- (0,0) coordinate (centre) node[left] {}
    -- (0,1.2) coordinate (p3) node[below right] {$p_3$};
    \draw[dotted]
    -- (0,0) coordinate (centre) node[left] {}
    -- (-0.7,1) coordinate (p4) node[below right] {$p_4$};

    \draw
    pic["$\beta_1$", draw=orange, ->, angle eccentricity=1.4, angle radius=0.6cm] {angle=n1--c--n2};
    \draw
    pic["$\beta_2$", draw=orange, ->, angle eccentricity=1.4, angle radius=0.6cm] {angle=n2--c--n3};
    \draw
    pic["$\beta_3$", draw=orange, ->, angle eccentricity=1.4, angle radius=0.6cm] {angle=n3--c--n4};
    \draw
    pic["$\beta_4$", draw=orange, ->, angle eccentricity=1.4, angle radius=0.6cm] {angle=n4--c--n1};
    
\end{tikzpicture}   
\caption{Scheme of the system with four agents $A_1$, $A_3$, $A_5$ and $A_8$ at positions $p_1$, $p_4$, $p_3$, $p_2$, respectively. } \label{Scheme1}
\end{figure}

\begin{remark}
In Fig. \ref{Scheme1}, note that agent $A5$ may be in position
$p_3$ and agent $A8$ may be in position $p_2$. So $p_i$ denotes the agent in position $i$. However, we define agent $A1$ to be the sensing agent and it is always in position $p_1$ so agent in position $p_2$ should be the one with the smallest angle to $p_1$, counterclockwise. 
\end{remark}

The kinematic of the agents are of the form
\begin{align} \label{e:dyn_agent}
\mathbf{\dot p_i} = \mathbf{u_i}, \qquad   i\in\calI,
\end{align}
where $\mathbf{p_i}$ is a vector that contains the position $p_i = [ x_i,  y_i ]^{\top}\in\R^2$ 
and $\mathbf{u_i}\in\mathbb{R}^2$ is the control input.

We define the distance to the centre and the boundary of the target circle as 
\begin{equation}\label{distances}
\begin{aligned} 
D^c_i(t) = \|\mathbf{c}(t)-p_i(t)\| \\
D^b_i(t) = |r(t)-D_i(t)|, 
\end{aligned} 
\end{equation}
respectively. Note that even though only one of the agents is able to measure these values, we will use this notation throughout this paper to refer to the distances of each agent to the target, as in Fig. \ref{Scheme}.

\begin{definition}[Circumnavigation]
When the target is stationary, i.e., $\mathbf{c}$ and $r$ are constant, circumnavigation is achieved if the agents   
\begin{enumerate}
\item move in a counterclockwise direction on the boundary of the target, and
\item are equally distributed along the circle, i.e., $\beta_i =\frac{2\pi}{n} $.
\end{enumerate}
More precisely, we say that the circumnavigation is achieved asymptotically if the previous aim is satisfied for $t\rightarrow\infty$. 

For the case with time-varying target, we assume that $\|\dot{\mathbf{c}}\|\leq\varepsilon_1$ and $|\dot{r}|\leq\varepsilon_2$ for some positive constant $\varepsilon_1$ and $\varepsilon_2$.
\end{definition}

\begin{figure}
\centering
\begin{tikzpicture}
  [
    scale=2,
    >=stealth,
    point/.style = {draw, circle,  fill = black, inner sep = 1pt},
    dot/.style   = {draw, circle,  fill = black, inner sep = .2pt},
  ]
 
  \def\rad{0.9}
  \node (real) at (-0.3,0.2) [point, label = {above left:$c$}]{};
  \draw (real) circle (\rad); 
  \node (n5) at (-1.2,0.2) [point, label] {};
  \draw[-] (real) -- node (d) [label = {above left:$r$}] {} (n5);
  
  \def\rad{0.8}
  \node (est) at (0,0) [point, label = {below right:$\hat{c}$}]{};
  \draw (est) circle (\rad);

  \node (n1) at (0.7,1) [point, label] {};
  \node (n2) at +(-180:\rad) [point, label] {};
  \node (n3) at (1.2,0) [point, label] {};
  \node (n4) at (0.8,0) [point, label] {};
  
  \draw[-] (est) -- node (b) [label = {below:$\hat{r}$}] {} (n2);
  \draw[-] -- node (c) {} (n3);
  \draw[-] (n3) -- node (n3) [label = {below:$D^b_i$}] {} (n4);
  \draw[-] (est) -- node (n1) [label = {left:$D^c_{i+1}$}] {} (n1);
  
    \draw[dotted]
    (0.7,1) coordinate (a) node[right] {$p_{i+1}$}
    -- (0,0) coordinate (b) node[left] {}
    -- (1.2,0) coordinate (c) node[above right] {$p_{i}$};
    
    \draw
    pic["$\beta_i$", draw=orange, ->, angle eccentricity=1.4, angle radius=0.6cm] {angle=c--b--a};

    \draw[decoration={random steps, amplitude=2mm}, decorate] (-0.3,0.2) circle (0.9);
    
\end{tikzpicture}   
\caption{Scheme of the estimated $\hat{c}$, $\hat{r}$ and the real target $c$, $r$ as well as the angle $\beta_i$ between two vehicles $p_{i+1}$ and $p_i$ } \label{Scheme}
\end{figure}
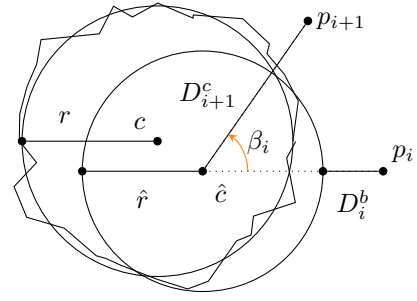

Now we are ready to pose the problem of interest that will be solved in the following sections.

\medskip

\textbf{Problem 1.} 
Design estimator for $\mathbf{c}(t)$ and $r(t)$ when distance measures \eqref{distances} are available to one of the agents, and  design the control input $\mathbf{u}_i$ for all the agents such that for some positive $\varepsilon_1$, $\varepsilon_2$,
\begin{align}
    &\|\dot c \|\leq\varepsilon_1\\
    &|\dot r |\leq\varepsilon_2,
\end{align}
there exist positive $K_1$, $K_2$ and $K_3$ satisfying
\begin{align}
    &\lim\sup_{t\rightarrow\infty} \|\hat{\mathbf{c}}(t)-\mathbf{c}(t)\| \leq K_1 \varepsilon_1,\\
    &\lim\sup_{t\rightarrow\infty} |\hat{r}(t)-r(t)| \leq K_2 \varepsilon_2,\\
    &\lim\sup_{t\rightarrow\infty} |\hat D_i^c-\hat r(t)| \leq K_3 \varepsilon_2,\\
    &\lim_{t\rightarrow\infty} \beta_i = \frac{2\pi}{n}.
\end{align}

Throughout the rest of the paper, we denote the estimate of the target as
\begin{equation}
\begin{aligned} 
(\mathbf{\hat c},\hat r) \in \mathbb{R}^3, \\
\end{aligned}
\end{equation}
where $\mathbf{\hat c}$ = ($\hat x_t$ $\hat y_t$).

\section{MAIN RESULTS}\label{s: main}

Here we present our solution for Problem 1. We consider $n$ agents at positions $p_i(t)$ and we assume only one of them is capable of measuring its distance $D_i^b(t)$ to the target boundary as well as its distance $D_i^c(t)$ to the target centre. Then, it should estimate $(\mathbf{c}(t),r(t))$ from its distance measures $D_i^b(t)$, $D_i^c(t)$ and share the information with the other agents. Each agent calculates its desired velocity taking into account its angle $\beta_{i}(t)$ to the next agent as well as its distance to the target centre and boundary, obtained with the estimates of the target. The scheme on Fig.\ref{Scheme5} summaries this algorithm loop.

Motivated by \cite{Shames2012}, we propose the following adaptive estimation of the radius $r(t)$ of the target using the sensing agent $A1$ in position $p_1$.
Observe that
\begin{align}\label{e:d-D^b}
\frac{d}{dt}(D_1^b)^2 = 2(\dot{r}-\dot{D_1^c})(r-D_1^c).
\end{align}
Assume the estimate of $r$ is denoted as $\hat{r}$, we have
\begin{align}
\frac{1}{2}\big( \frac{d}{dt}(D_1^b)^2-\frac{d}{dt} (D_1^c)^2 \big) +\dot{D_1^c}\hat{r} = \dot{D_1^c}(\hat{r}-r) + \dot{r}(r-D_1^c).
\end{align}
Then for some positive constant $\gamma$ the dynamic
\begin{align}\label{e:esti-c-not-applicable}
\dot{\hat{r}} = -\gamma \dot{D_1^c} \big[ \frac{1}{2}\big( \frac{d}{dt}(D_1^b)^2-\frac{d}{dt} (D_1^c)^2 \big) +\dot{D_1^c}\hat{r} \big]
\end{align}
can estimate the variable $r$ under the persistent excitation condition. Indeed, in this case
\begin{align}
\frac{d}{dt} (\hat{r}-r) =  - \gamma (\dot{D_1^c})^2 (\hat{r}-r) - \vartheta_{\dot{r}},
\end{align} 
where $\vartheta_{\dot{r}} = \dot{r}(\gamma\dot{D_1^c}(r-D_1^c)+1)$ is bounded by $M_1\varepsilon_2$. Indeed all its elements are bounded by $M_1$ and recall that $|\dot{r}|\leq\varepsilon_2$. Note that $r-D_1^c$ is bounded because $r$ and $D_1^c$ are bounded as well. Furthermore, as it will be clear soon, $\vartheta_{\dot{r}}$ can be replaced by $\vartheta_{\dot{r}} = \dot{r}(\gamma V(r-D_1^c)+1)$ using equations \eqref{num1} and \eqref{num2}, where $V$ is the bounded estimate of $\dot{D_1^c}$.

However, the implementation of \eqref{e:esti-c-not-applicable} needs the derivative of $D_1^b$ and $D_1^c$ which is not desired. It would require explicit differentiation of
measured signals with accompanying noise amplification. Thus, for some positive constant $\alpha$ we adopt the state variable filtering and then design the estimator as follows
\begin{align}\label{e:aug-filtering}
\dot{z}_1 & = -\alpha z_1(t) + \frac{1}{2}(D_1^b)^2 \\
\eta(t) & =  \dot{z}_1 \\
\dot{z}_2 & = -\alpha z_2(t) + \frac{1}{2}(D_1^c)^2 \\ \label{e:reused}
m(t) & = \dot{z}_2 \\
\dot{z}_3 & = -\alpha z_3(t) + D_1^c \label{num1}\\ 
V(t) & = \dot{z}_3 \label{num2}
\end{align}
with initial conditions $z_1(0)=z_2(0)=z_3(0)=0.$ Now together the above dynamics, the estimator for $r$ is given as
\begin{align}  \label{estimateradius}
\dot{\hat{r}} = -\gamma V \big[ \eta-m +V \hat{r} \big]. 
\end{align} 

Now we need to know $\mathbf{c}(t)$ but we only know $D_1^c(t)$ and $D_1^b(t)$. Thus, we must use again adaptive estimation for the centre $\mathbf{c}(t)$ of the target.

Observe that
\begin{align}
\frac{d}{dt}(D_1^c)^2 = 2(\dot{p_1}-\mathbf{\dot{c}})^\top(p_1-\mathbf{c}).
\end{align}
Assume the estimation of ${\mathbf{c}}$ is denoted as ${\mathbf{\hat{c}}}$, we have
\begin{align}
\frac{1}{2}\big( \frac{d}{dt}(D_1^c)^2-\frac{d}{dt} {\|p_1\|}^2 \big) +\dot{p_1}^\top\mathbf{\hat{c}}& \nonumber \\ 
 = \dot{p_1}^\top(\mathbf{\hat{c}}-\mathbf{c}) & + \mathbf{\dot{c}}^\top(\mathbf{c}-p_1).
\end{align}
Then the dynamic
\begin{align}\label{e:esti-r-not-applicable}
\mathbf{\dot{\hat{c}}} = -\gamma \dot{p_1} \big[ \frac{1}{2}\big( \frac{d}{dt}(D_1^c)^2-\frac{d}{dt} {\|p_1\|}^2 \big) +\dot{p_1}^\top\mathbf{\hat{c}} \big]
\end{align}
can estimate the parameter $\mathbf{c}$ under some persistent excitation condition on $\dot{p_1}$. Indeed, in this case
\begin{align}
\frac{d}{dt} (\mathbf{\hat{c}} - \mathbf{c}) =  - \gamma {\|\dot{p_1}\|}^2 (\mathbf{\hat{c}} - \mathbf{c}) - \vartheta_{\dot{\mathbf{c}}},
\end{align} 
where $\vartheta_{\dot{\mathbf{c}}} = \gamma\dot{\mathbf{c}}^\top\dot{p_1}(\mathbf{c}-p_1)+\dot{\mathbf{c}}$ is bounded by $M_2\varepsilon_1$. Indeed all its elements are bounded by $M_2$ and recall that $|\dot{\mathbf{c}}|\leq\varepsilon_1$. Note that $\mathbf{c}-p_1$ is bounded because  $\mathbf{c}$ and $p_1$ are within a finite map. Furthermore, as it will be clear soon, $\vartheta_{\dot{\mathbf{c}}}$ can be replaced by $\vartheta_{\dot{\mathbf{c}}} = \gamma\dot{\mathbf{c}}^\top V_2(\mathbf{c}-p_1)+\dot{\mathbf{c}}$ using equations \eqref{num3}-\eqref{num4}, where $V_2$  is the estimate of $\dot{p_1}$ and it is bounded.

However, the implementation of \eqref{e:esti-r-not-applicable} needs the derivative of $p_1(t)$ and $D_1^c(t)$ which is not desired. Therefore we use the previously defined equation \eqref{e:reused} for $D_1^c(t)$ and redefine it as $\eta_2(t)  =  \dot{z}_2$  and add the following
\begin{align}\label{e:aug-filtering2}
\dot{z}_4 & = -\alpha z_4(t) + \frac{1}{2}p_1(t)p_1^T(t) \\
m_2(t) & = \dot{z}_4 \\
\dot{z}_5 & = -\alpha z_5(t) + p_1(t) \label{num3}\\
V_2(t) & = \dot{z}_5 \label{num4}
\end{align}
with initial conditions $z_4(0)=z_5(0)=0.$ Now together the above dynamics, the estimator for \textbf{c} is given as
\begin{align}  \label{estimatecentre}
\mathbf{\dot{\hat{c}}} = -\gamma V_2 \big[ \eta_2-m_2 +V_2^T \mathbf{\hat{c}} \big]. 
\end{align} 

Now, we want to obtain the desired control input $\mathbf{u_i}(t)$ using the previously measured and estimated variables. The total velocity of each agent comprises of two sub-tasks: approaching the target and circumnavigating it. Therefore we define the direction of each agents towards the centre of the target as the bearing $\psi_i(t)$,
\begin{equation} \label{bearing}
 \psi_i(t) = \frac{\mathbf{\hat{c}}(t) - p_i(t)}{\hat{D^c_i}(t)} = \frac{\mathbf{\hat{c}}(t) - p_i(t)}{\|\mathbf{\hat{c}}(t)-p_i(t)\| \\}. 
\end{equation}

Note that $\psi_i$ in \eqref{bearing} is not well-defined when $\hat{D^c_i}=0$, thus we will prove that this singularity is avoided for all time $t\geq 0$ in the third proof of Theorem \ref{Theo1}..

The first sub-task is related to the bearing $\psi_i(t)$ and the second one is related to its perpendicular, $E\psi_i(t)$.
Therefore, the control law for each agent $i$ is
\begin{equation}\label{e:control-adapt}
\begin{aligned} 
\mathbf{u_i} = & 
\mathbf{\dot{\hat{c}}} + ((\hat{D^c_i} - \hat r) - \dot{\hat{r}})\psi_i + \beta_i \hat{D^c_i} E \psi_i
\end{aligned}
\end{equation}

\begin{remark}
    Note that for implementation we would define $U_i$ as the control input for each agent $i$. Then, $U_i$ must have a maximum absolute value $u_{max}$ since the maximum velocity of the agent would be limited as well. $U_i$ could either be represented as $U_i=\delta u_i$, being $\delta$ some positive parameter for tunning. Or represented as the saturation function: if $\|u_i\| > u_{max}$ then $U_i=\frac{u_{max}}{\|u_i\|}u_i$, else $U_i = u_i$.
\end{remark}

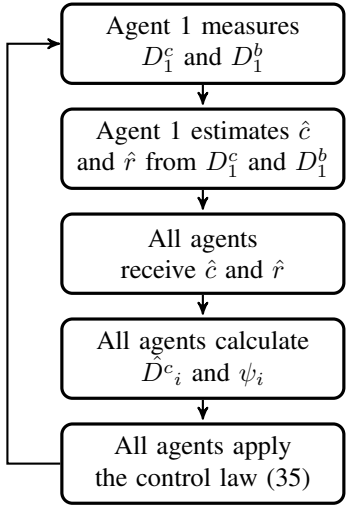
\begin{figure}
\centering
    \begin{tikzpicture}
  [node distance=.3cm, start chain=going below]
     \node[punktchain, join] (1) {Agent 1 measures $D^c_1$ and $D_1^b$};
     \node[punktchain, join] (2) {Agent 1 estimates $\hat{c}$ and $\hat{r}$ from $D^c_1$ and $D_1^b$};
     \node[punktchain, join] (3) {All agents receive $\hat{c}$ and $\hat{r}$};
     \node[punktchain, join] (4) {All agents calculate $\hat{D^c}_i$ and $\psi_i$};
     \node[punktchain, join] (5) {All agents apply the control law \eqref{e:control-adapt}};
     
     \draw[|-,-|,->, thick,] (5.west) -|+(-2em,0)|- (1.west);
  \end{tikzpicture}
\caption{Scheme of the algorithm run on the system} \label{Scheme5}
\end{figure}

\begin{theorem} \label{Theo1}
The initial condition satisfies $\hat{D^c_i}(0)>\hat{r}(0) > 0$. Suppose $\dot{p_1}(t)$ and $\dot{D_1^c}(t)$ are p.e., $\|\dot{\mathbf{c}}\|\leq\varepsilon_1$, and $|\dot{r}|\leq\varepsilon_2$. Consider the system \eqref{e:dyn_agent} with the control protocol \eqref{e:control-adapt}, and the initialisation satisfying  $\|p_i(0)-\hat{c}(0)\|>0$, then there exists $K_1$, $K_2$ and $K_3$ such that circumnavigation of the moving circle with equally spaced agents can be achieved asymptotically up to a bounded error, i.e.
\begin{align}\label{oi}
    &\lim\sup_{t\rightarrow\infty} \|\hat{\mathbf{c}}(t)-\mathbf{c}(t)\| \leq K_1 \varepsilon_1,\\\label{oi2}
    &\lim\sup_{t\rightarrow\infty} |\hat{r}(t)-r(t)| \leq K_2 \varepsilon_2,\\\label{oi3}
    &\lim\sup_{t\rightarrow\infty} |\hat D_i^c-\hat r(t)| \leq K_3 \varepsilon_2,\\ \label{oi4}
    &\lim_{t\rightarrow\infty} \beta_i = \frac{2\pi}{n}.
\end{align}
\end{theorem}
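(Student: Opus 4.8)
The plan is to split the argument along the three estimators/tasks that make up the protocol, organised as three sub-proofs: convergence of the centre estimate $\hat{\mathbf c}$ \eqref{oi}, convergence of the radius estimate $\hat r$ \eqref{oi2}, and the closed-loop behaviour of the agents themselves --- radial convergence $\hat D_i^c\to\hat r$ \eqref{oi3} together with angular convergence $\beta_i\to 2\pi/n$ \eqref{oi4} --- the last also being where the well-posedness of the bearing \eqref{bearing} is secured.

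For \eqref{oi} and \eqref{oi2} I would proceed as follows. Once the state-variable-filtered signals $V$, $V_2$, $\eta$, $m$, $m_2$ replace the true derivatives, the errors $e_r:=\hat r-r$ and $e_{\mathbf c}:=\hat{\mathbf c}-\mathbf c$ obey the linear time-varying dynamics $\dot e_r=-\gamma V^2 e_r-\vartheta_{\dot r}$ and $\dot e_{\mathbf c}=-\gamma V_2V_2^\top e_{\mathbf c}-\vartheta_{\dot{\mathbf c}}$ displayed in Section~\ref{s: main}, with $|\vartheta_{\dot r}|\le M_1\varepsilon_2$ and $\|\vartheta_{\dot{\mathbf c}}\|\le M_2\varepsilon_1$ (the filter initial conditions contribute only terms decaying exponentially at rate $\alpha$, absorbed into the transient, and all regressors stay bounded because $D_1^c,D_1^b,p_1$ live in a finite map). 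Since $\dot D_1^c$ and $\dot p_1$ are p.e.\ by hypothesis and persistence of excitation is inherited by the outputs of the stable, minimum-phase filters \eqref{num1}--\eqref{num2} and \eqref{num3}--\eqref{num4}, the regressors $V$ and $V_2$ are p.e.; hence the homogeneous parts $\dot e=-\gamma\phi\phi^\top e$ are uniformly exponentially stable by the classical persistence-of-excitation theorem \cite{Anderson77,SHIMKIN1987}. A standard ultimate-boundedness (converse-Lyapunov / ISS) estimate for a uniformly exponentially stable linear time-varying system driven by a bounded input then yields $\limsup_{t\to\infty}|e_r|\le K_2\varepsilon_2$ and $\limsup_{t\to\infty}\|e_{\mathbf c}\|\le K_1\varepsilon_1$.

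For \eqref{oi3}--\eqref{oi4}, pass to the coordinates $q_i:=p_i-\hat{\mathbf c}$, so that $\psi_i=-q_i/\|q_i\|$ and $\hat D_i^c=\|q_i\|$; substituting \eqref{e:control-adapt} into $\dot p_i=\mathbf u_i$ (the feedforward $\dot{\hat{\mathbf c}}$ cancels) gives
\begin{equation*}
\dot q_i=-\frac{(\|q_i\|-\hat r)-\dot{\hat r}}{\|q_i\|}\,q_i-\beta_i E q_i .
\end{equation*}
Projecting onto $q_i$ and using $q_i^\top E q_i=0$ gives $\tfrac{d}{dt}\|q_i\|=-\|q_i\|+\hat r+\dot{\hat r}$, hence $\tfrac{d}{dt}(\hat D_i^c-\hat r)=-(\hat D_i^c-\hat r)$ and $\hat D_i^c(t)-\hat r(t)=(\hat D_i^c(0)-\hat r(0))e^{-t}$; this is \eqref{oi3} (in fact with limit $0$). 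Combined with $\hat D_i^c(0)>\hat r(0)>0$ and a uniform-in-$t$ lower bound on $\hat r$ --- finite on compact intervals by boundedness of the estimator, and positive for large $t$ by \eqref{oi2} --- this shows $\hat D_i^c(t)>0$ for all $t\ge 0$, so $\psi_i$ never becomes singular. Projecting instead onto $E q_i$ and using $E^2=-I$ gives the decoupled angular law $\dot\theta_i=\beta_i$ with $\theta_i:=\arg q_i$; since $\beta_i=\theta_{i+1}-\theta_i$ cyclically (with the continuous lift $\theta_{n+1}=\theta_1+2\pi$), we obtain $\dot\beta_i=\beta_{i+1}-\beta_i$, i.e.\ $\dot\beta=(S-I)\beta$ with $S$ the cyclic shift matrix. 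This matrix is Metzler with $\one^\top(S-I)=0$, so the nonnegative orthant is forward invariant and $\sum_i\beta_i(t)\equiv 2\pi$; its spectrum $\{e^{2\pi\mathrm{i}k/n}-1:k=0,\dots,n-1\}$ has a simple eigenvalue $0$ (eigenvector $\one$) and all other eigenvalues in the open left half-plane, whence $\beta(t)\to\frac{2\pi}{n}\one$, which is \eqref{oi4} (and $\dot\theta_i=\beta_i\ge 0$ gives the counterclockwise motion).

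The step I expect to be most delicate is the rigorous treatment of the state-variable filters behind \eqref{oi}--\eqref{oi2}: replacing the filtered version of $\dot D_1^c\,\hat r$ by $V\hat r$ (and likewise $\dot p_1^\top\hat{\mathbf c}$ by $V_2^\top\hat{\mathbf c}$) does not commute, so one must either invoke a swapping lemma or argue that the mismatch is itself $O(\varepsilon)$ once $\hat r$, $\hat{\mathbf c}$ are known to be bounded. Closely tied to this is the second subtlety --- carrying a strict positive lower bound on $\hat r$, hence on $\hat D_i^c$, through \emph{all} finite $t$ rather than merely asymptotically --- which is precisely what keeps \eqref{bearing} well defined along the whole trajectory.
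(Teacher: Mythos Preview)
Your proposal is correct and follows essentially the same four-part decomposition as the paper: bounded estimation errors via persistence of excitation (the paper cites \cite{Shames2012} for $\hat{\mathbf c}$ and uses a direct Lyapunov argument for $\hat r$, where you invoke the general PE/ISS machinery), exponential decay of $\hat D_i^c-\hat r$ by projecting the $q_i$-dynamics radially, well-posedness of $\psi_i$ from the resulting lower bound, and convergence of $\beta$ via the directed-ring Laplacian (the paper cites \cite{Wei2018}, you compute the spectrum of $S-I$ directly). Your explicit caveat about the swapping/commutation issue in the state-variable filters is a point the paper does not address, so you are if anything more careful there.
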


\begin{proof}
The proof is divided into four parts. In the first part, we prove that \eqref{oi} and \eqref{oi2} hold. In the second part, we prove that the estimated distance $\hat{D^c_i}$ converges to the estimated radius $\hat{r}$, or in other words, that \eqref{oi3} holds. In the third part we prove that the singularity of the bearing $\psi_i(t)$ is avoided. In the last part, we show that the angle between the agents will converge to the average consensus for $n$ agents, $\beta_i = \frac{2\pi}{n}$, meaning \eqref{oi4} holds. We will assume the implementable controller is given by $U_i=\delta u_i$.
\begin{enumerate}

    \item Firstly, we prove that \eqref{oi} and \eqref{oi2} hold. The proof for boundedness of the centre \eqref{oi}, can be found on \cite{Shames2012}, Proposition 7.1. The proof for boundedness of the radius, however, needs to be derived in this paper.
    Then, we have that 
    \begin{equation}
    \begin{aligned}
        \dot{\tilde{r}} &= \dot{\hat{r}} = -\gamma V \big[ \eta-m +V \hat{r} \big] \\
        &= -\gamma V \big[ \eta-m +V (\tilde{r}+r) \big]\\
        &= -\gamma V^2\tilde{r} - \gamma V\big[ \eta-m +V r \big]\\
        &= -\gamma V^2\tilde{r} + G(t) \\
    \end{aligned}
    \end{equation}
    where $G(t) = - \gamma V\big[ \eta-m +V r \big]$. We know that $|G(t)|\leq k_1\epsilon_2$ for some $k_1,\epsilon_2 \geq 0$ because V is bounded and that $|\eta-m +Vr| < k_2$ we can prove that for a Lyapunov function $W_r=\frac{1}{2}\tilde r^2$ we get
    \begin{equation}
    \begin{aligned}
        \dot W_r &= \tilde{r} \dot{\tilde{r}} = \tilde{r} (-\gamma V^2\tilde{r} +G(t)) \\
        &=  -\gamma V^2\tilde{r}^2 +\tilde{r}G(t)\\
        &\leq -\gamma V^2\tilde{r}^2 +k_1\epsilon_2\tilde{r}
    \end{aligned}
    \end{equation}
    then we get that for $\dot W_r \leq 0$ to hold, $-\gamma V^2\tilde{r}^2 +k_1\epsilon_2\tilde{r}\leq 0 $ must hold. So, we have that when $\tilde{r} \geq \frac{k_1\epsilon_2}{\gamma V^2}$ or $\tilde{r} \leq -\frac{k_1\epsilon_2}{\gamma V^2}$, $\dot W_r \leq 0$ so that $|\tilde{r}|$ is within $\pm\frac{k_1\epsilon_2}{\gamma V^2}$. This error $\tilde{r}$ is then proved to converge asymptotically to a ball since $\dot{D^c_1}$ is p.e.. 
    
    \item We prove that all agents reach the estimate of the boundary of the moving circles asymptotically, i.e., $\lim_{t\rightarrow\infty}{\|p_i(t)-\hat{c}(t)\|} = \lim_{t\rightarrow\infty}{\hat{D^c_i}(t)} = \hat r(t)$, so \eqref{oi3}  holds.     
        
    Consider the function $W_i(t):=\hat{D^c_i}(t)-\hat r(t)$ whose time derivative for $t\in[0,\tau_{\max})$ is given as
    \begin{align*}
        \dot W_i = & \frac{(\mathbf{\hat{c}}-p_i)^\top (\mathbf{\dot{\hat{c}}}-\dot{p}_i)}{\hat{D^c_i}} - \dot{\hat{r}} \\
         =  & -\frac{(\mathbf{\hat{c}}-p_i)^\top}{\hat{D^c_i}} \delta ((\hat{D^c_i} - \hat r - \dot{\hat{r}})\psi_i + \beta_i \hat{D^c_i} E\psi_i)\\
         & \qquad - \dot{\hat{r}}\\
         =  & -\frac{(\mathbf{\hat{c}}-p_i)^\top}{\hat{D^c_i}}\psi_i\delta (\hat{D^c_i} - \hat r - \dot{\hat{r}}) \\
         & \qquad -\frac{(\mathbf{c}-p_i)^\top}{\hat{D^c_i}}E\psi_i\delta \beta_i \hat{D^c_i} - \dot{\hat{r}} \\
         = & - \delta(\hat{D^c_i} - \hat{r} - \dot{\hat{r}}) - \dot{\hat{r}} \\
         = & - \delta W_i.
    \end{align*}
    Hence for $t\in[0,+\infty)$, we have $\hat{D^c_i}(t) = \delta W_i(0) e^{-t} + \hat{r}(t)$ which implies $W_i$ is converging to zero exponentially.
    
    \item Now, we prove that $\psi_i$ in \eqref{bearing} is well-defined, or in  other words, that its singularity is avoided for all time $t\geq 0$, $\hat{D^c_i}\ne0$ $\forall t$.

    Having $\hat{D^c_i}(t) = \delta W_i(0) e^{-t} + \hat{r}(t)$ from the previous proof and knowing that $W_i(0)$ is always positive and that it converges to zero exponentially, we have that if $\hat{r}(t)>0$ then $\hat{D^c_i}(t)>0$, $\forall t$.
    
    So we want to prove that $\hat{r}(t)>0$ $\forall t$. We can get that $\hat{r} = r+\tilde{r} \Leftrightarrow r - \frac{k_1\epsilon_2}{\gamma V^2} \leq \hat{r} \leq r + \frac{k_1\epsilon_2}{\gamma V^2}$. Then, since we know $\frac{k_1\epsilon_2}{\gamma V^2}$ is a small bound, we get that $\hat{r} \geq r - \frac{k_1\epsilon_2}{\gamma V^2} > 0$. 
    
    Then we conclude that $\hat{D^c_i}\ne0$ $\forall t$ and that the bearing $\psi_i(t)$ is well defined $\forall t$.
    
    \item Finally, we show that the angle between the agents will converge to the average consensus for $n$ agents, $\beta_i = \frac{2\pi}{n}$, so \eqref{oi4} holds. 
    
    Firstly, note that we can write an angle between two vectors $\beta_i=\angle(v_2,v_1)$ as
    \begin{equation}
        \beta_i=2atan2((v_1\times v_2)\cdot z,\|v_1\|\|v_2\|+v_1\cdot v_2)
    \end{equation}
    and its derivative as
    \begin{equation}
        \dot\beta_i=\frac{\hat{v_1}\times z}{\|v_1\|}\dot{v_1}-\frac{\hat{v_2}\times z}{\|v_2\|}\dot{v_2}
    \end{equation}
    where $z = \frac{v_1\times v_2}{\|v_1\times v_2\|}, \hat{v_i} = \frac{v_1}{\|v_i\|}, i=1,2$.
    
    Then, for $v_1=p_i-\hat{c}$ and $v_2=p_{i+1}-\hat{c}$ we get
    \begin{equation*}
    \begin{aligned}
        \dot{\beta_i} &= \frac{\hat{v_1}\times z}{\|v_1\|}\dot{v_1}-\frac{\hat{v_2}\times z}{\|v_2\|}\dot{v_2} \\
        &= \frac{\hat{v_1}\times z}{\|v_1\|}\delta((\hat{D^c_i} - \hat r - \dot{\hat{r}})\psi_i + \beta_i \hat{D^c_i} E \psi_i) \\
        & \qquad -\frac{\hat{v_2}\times z}{\|v_2\|}\delta((\hat{D}^c_{i+1} - \hat r - \dot{\hat{r}})\psi_{i+1} \\
        & \qquad \qquad + \beta_{i+1} \hat{D^c_{i+1}}  E \psi_{i+1}) \\
        &= \delta (-\beta_{i} + \beta_{i+1}), \qquad i= 1,\ldots,n-1 \\ 
        \dot{\beta}_n &= \delta (-\beta_{n} + \beta_{1}).
    \end{aligned}
    \end{equation*}
    which can be written in a compact form as following
    \begin{align}
        \dot{\beta} = - \delta B^\top \beta \label{e:dyn-beta}
    \end{align}
    where $B$ is the incidence matrix of the directed ring graph from $v_1$ to $v_n$. 
    
    First, we note that the system \eqref{e:dyn-beta} is positive (see e.g., \cite{farina2000positive}), i.e., $\beta_i(t)\geq 0$ if $\beta_i(0)\geq 0$ for all $t\geq 0$ and $i\in\calI$. This proves the positions of the agents are not interchangeable.
    
    Second, noticing that $B^\top$ is the (in-degree) Laplacian of the directed ring graph which is strongly connected, then by Theorem 6 in \cite{Wei2018}, we  $\beta$ converges to consensus $\frac{2\pi}{n}\mathds{1}$.
    
    
    \qedsymbol
\end{enumerate}
\end{proof}

\begin{remark}
    Recall remark 1 on agent $A_j$ being in position $p_i$. After the proof note how the agent $A_j$ will necessarily maintain its relative position $p_i$ throughout the circumnavigation mission. Indeed, without loss of generality we could say that agent $A_j$ is always in position $p_i$.
\end{remark}

\begin{remark}
Recall Definition 1 on persistent excitation.
This means that for the persistently exciting condition to apply, the agent must move in a trajectory that is not confined to a straight line in the 2D space. As referred in \cite{Shames2012}, 
"The agent cannot simply head straight towards the target but must execute a richer class of motion.". Then, for a circumnavigation mission we could infer that the p.e. condition is guaranteed. 
\end{remark}

\begin{remark}
The dynamic of \eqref{e:dyn-beta} can be considered as the time-varying of advection systems on graphs \cite{Chapman2011}.
\end{remark}

\section{SIMULATIONS}\label{s:simulation}

In this section, we present simulations of both protocols designed in section \ref{s: main}. For the first result we used the derived method for estimation of the target \eqref{estimateradius} and \eqref{estimatecentre} and the controlling protocol for the agents \eqref{e:control-adapt}. We simulate a moving target with initial position $(x(0),y(0))=(25,25)$, radius $r(0)=10$ and dynamic according to

\begin{equation} \label{eq1}
\begin{aligned}
    & \dot x(t) = \alpha_1(t) + 0.5 \\    
    & \dot y(t) = \alpha_2(t) + 0.5 \\     
    & \dot r(t) = \alpha_3(t)  
\end{aligned}
\end{equation}

However, we simulate that the satellite will provide as an initial noisy estimate of $(\hat{x}(0),\hat{y}(0))=(25,25)$, radius $\hat{r}(0)=20$. Note that at time $t=0$ the radius estimate is double the real radius.

Here, $\alpha_i(t)$ is a random scalar drawn from the standard normal distribution for $i=1,2,3$. For this generated target we got the following results.
We can see the agents circumnavigating the moving target in Fig.\ref{pathpic} and Fig.\ref{path}. This gives us a more practical idea of how the agents behave in their target-tracking mission. Through their paths we can infer how the target behaved - varying radius and moving centre. 

\begin{figure}
  \begin{subfigure}[t]{.24\textwidth}
    \centering
    \includegraphics[width=\linewidth]{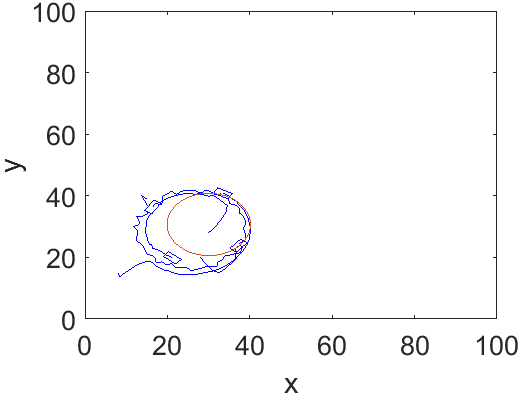}
  \end{subfigure}
  \begin{subfigure}[t]{.24\textwidth}
    \centering
    \includegraphics[width=\linewidth]{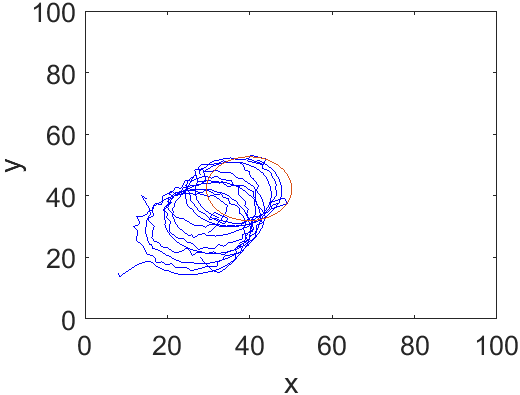}
  \end{subfigure}
  \begin{subfigure}[t]{.24\textwidth}
    \centering
    \includegraphics[width=\linewidth]{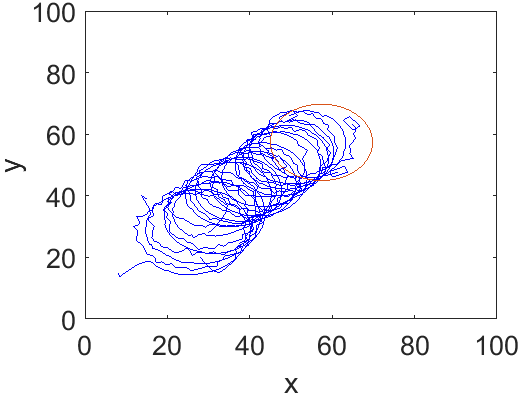}
  \end{subfigure}
  \begin{subfigure}[t]{.24\textwidth}
    \centering
    \includegraphics[width=\linewidth]{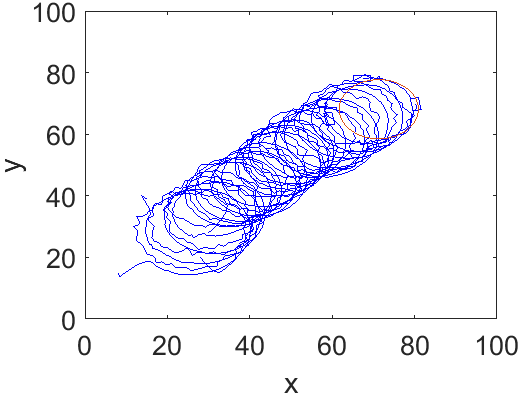}
  \end{subfigure}
  \caption{Time-lapse of four agents (blue rectangles) circumnavigating a moving target (red) with representation of their paths (blue)}
  \label{pathpic}
\end{figure}

\begin{figure}
    \centering
    \includegraphics[width=8.6cm,trim={0 0 0 0},clip]{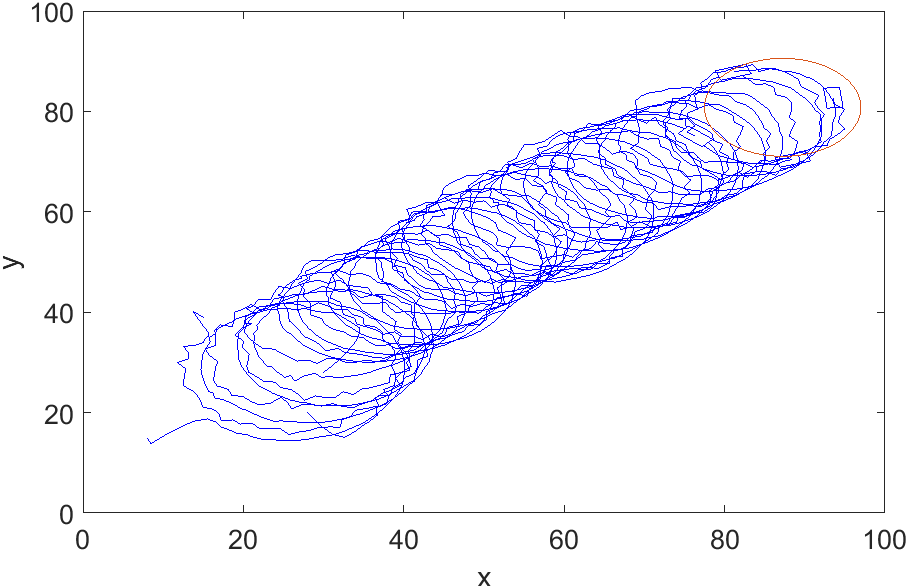}
    \caption{Four agents (blue rectangles) circumnavigating a moving target (red) with representation of their paths (blue)}
    \label{path}
\end{figure}
    
In Fig.\ref{targets} we have plenty of plots. On the first and second row we compare the real and estimated target. Note that the estimate of the centre $\hat{c}(\hat{x},\hat{y})$ has an estimation error of up to 2 units. Also note that the estimate of the radius $\hat{r}$ is composed of two instances. In the first, the initial estimate provided by the satellite was very noisy and so we can see the estimate converging rapidly to a more accurate estimation. In the second we can see an estimation error of up to 2 units.

On the third row left column, we can see the distance $D_i^b$ of each target to the boundary of the target - the perfect tracking would result in a distance $D_i^b$ of 0 for all agents, for every time step. Here we have an error of up to 0.5 units, except for the very beginning where the error can reach 10 units. This is merely because in the beginning the agents are far away from the target. 

On the third row right column, we have the angle between agent A1 and A2, $\beta_1$. Having 4 agents, the perfect tracking would result in $2\pi/4 = \pi/2 \approx 1.57$ for all agents, for every time step. We can see this reference as the red line in the plot so we see that, for agent A1, the error is up to 0.2 radians. 

Finally, on the fourth row we have the control input of agent A1, both in $x$ and $y$ in blue. Recall Remark. 2 where we stated that, for a practical implementation, there should be a maximum velocity $u_{max}$. For this case study we defined that $u_{max}=1.5$ and we plotted this limit in red. Note how the control input stays within the limit values 1.5 and -1.5.

\begin{figure}
    \centering
    \includegraphics[width=8.6cm,trim={0 0 0 0},clip]{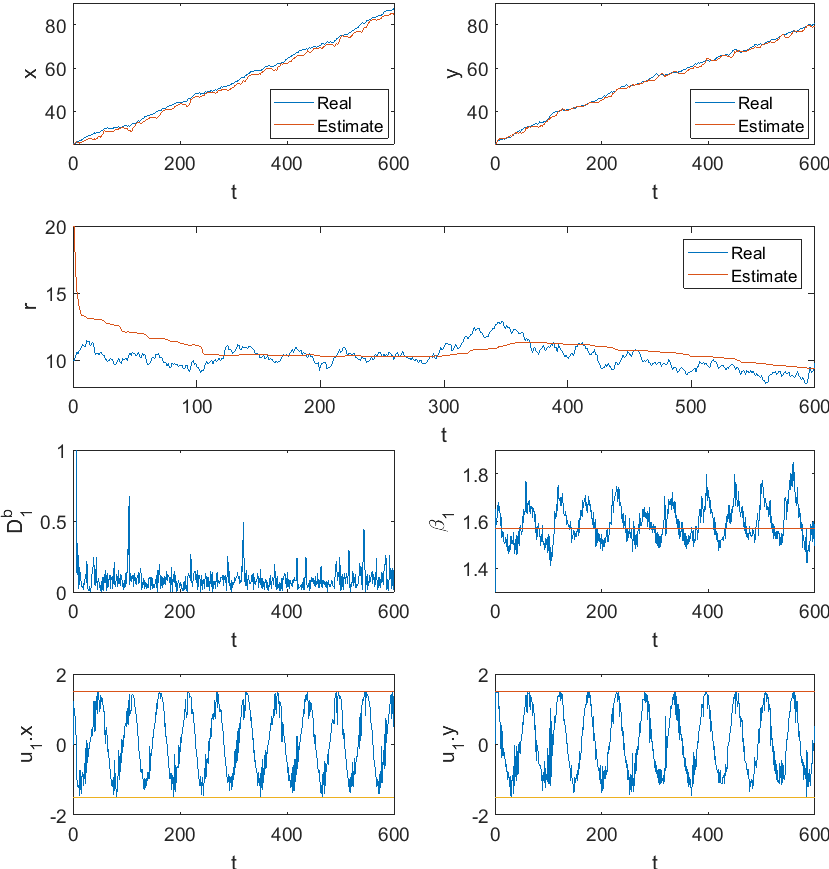}
    \caption{First and second row: real and estimated target's centre $c:x,y$ and radius $r$. Third row: tracking error of agent A1, $D_1^b$ and angle $\beta_1$. Fourth row: control input of agent A1, $u_1:x,y$}
    \label{targets}
\end{figure}

\section{CONCLUSIONS}\label{s: conclusion}

In this paper, we considered the circumnavigation problem of moving circles with varying radius. Our solution relied on $n$ agents for circumnavigation but only one for measurements as well as on a satellite with noisy and sparse estimates of the target. The measurements taken were the distance of the sensing agent to the boundary of the target as well as to its centre. A control protocols was proposed and its convergence to the desired behaviour was proved for given conditions such as $\dot p_1(t)$ and $\dot D_1^c(t)$ being p.e.. 

Future work includes the circumnavigation of this same irregular shape but without assuming it can be approximated by a moving circle. Also, having this moving circle, we would like to explore how we can do the circumnavigation with access to only the distance to the target boundary. This might be achieved exploiting the number of agents available and assuming they are all able to measure this distance. 

\addtolength{\textheight}{-12cm}   





\bibliographystyle{IEEEtran}
\bibliography{references}

\end{document}